\renewcommand{\phi}{\varphi}
\newcommand{\restr}{\upharpoonright}
\newcommand{\N}{\mathbb{N}}
\newcommand{\Q}{\mathbb{Q}}
\renewcommand{\P}{\mathbb{P}}
\newcommand{\LM}{\mathcal{M}}
\newcommand{\LP}{\mathcal{P}}
\newcommand{\LA}{\mathcal{A}}
\renewcommand{\b}{\mathfrak{b}}
\renewcommand{\d}{\mathfrak{d}}
\renewcommand{\a}{\mathfrak{a}}
\renewcommand{\c}{\mathfrak{c}}
\newtheorem{thm}{Theorem}[section]
\newtheorem{lemma}[thm]{Lemma}
\newtheorem{cor}[thm]{Corollary}
\theoremstyle{definition}
\theoremstyle{remark}
\newtheorem*{remark}{Remark}
\newcommand{\supp}{\mathrm{supp}}
\newcommand{\concat}{^\smallfrown}
\newcommand{\CH}{\mathsf{CH}}
\newcommand{\MA}{\mathsf{MA}}
\newcommand{\bb}{\mathrm{bb}}
\renewcommand{\P}{\mathbb{P}}
\newcommand{\FIN}{\mathrm{FIN}}
\newcommand{\non}{\mathrm{non}}
\title[Mad families of subspaces and the smallest nonmeager set]{Mad families of vector subspaces and the smallest nonmeager set of reals}
\date{September 17, 2019}
\author{Iian B. Smythe}
\address{Department of Mathematics, Rutgers University -- New Brunswick, 110 Frelinghuysen Road, Piscataway, NJ, 08854}
\urladdr{www.iiansmythe.com}
\email{i.smythe@rutgers.edu}
\subjclass[2010]{Primary 03E17; Secondary 15A03}
\keywords{Cardinal invariants, generalized mad families, vector spaces, parametrized $\diamondsuit$ principles}
\thanks{The author would like to thank J\"org Brendle and Michael Hru\v{s}\'ak for helpful conversations that have contributed to this work. He would also like to thank the Centro de Ciencias Matem\'{a}ticas at the Universidad Nacional Aut\'{o}noma de M\'{e}xico in Morelia and Casa Matem\'{a}tica Oaxaca for providing the time and space to have these conversations and contemplate the uncountable. Lastly, he must thank his friends Jeffrey Bergfalk and Sena Aydin for making him feel at home in Morelia, and the Department of Mathematics at Rutgers University for funding his travels to Mexico.}
\begin{document}

\begin{abstract}
	We show that a parametrized $\diamondsuit$ principle, corresponding to the uniformity of the meager ideal, implies that the minimum cardinality of an infinite maximal almost disjoint family of block subspaces of a countable vector space is $\aleph_1$. Consequently, this cardinal invariant is $\aleph_1$ in the Miller model. This verifies a conjecture of the author from \cite{Smythe_mad_vec}.
\end{abstract}

\maketitle

\section{Introduction}

The setting of this note is a countably infinite-dimensional vector space $E$ over a countable field $F$, with a distinguished basis $(e_n)_{n\in\N}$. For example, we may take $E=\bigoplus_{n\in\N} F$ and $e_n$ to be the $n$th unit coordinate vector. If $X$ is a set or sequence of vectors in $E$, then $\langle X\rangle$ will denote its linear span. We also fix a well-ordering of $E$ that will be used implicitly.

We will focus on \emph{block subspaces} of $E$, that is, those linear subspaces which have a basis $(x_n)_{n\in\N}$, called a \emph{block sequence}, such that for all $n$,
\[
	\max(\supp(x_n))<\min(\supp(x_{n+1})),
\]
where the \emph{support} of a nonzero vector $v$ is the set $\supp(v)$ of those $n$ for which the coefficient of $e_n$ in the basis expansion of $v$ is nonzero. We will abbreviate $\max(\supp(v))<\min(\supp(w))$ by writing $v<w$. It is easy to see that every infinite-dimensional subspace contains a block subspace.

Two subspaces $X$ and $Y$ of $E$ are \emph{almost disjoint} if their intersection $X\cap Y$ is finite-dimensional. A collection $\LA$ of infinite-dimensional subspaces which are pairwise almost disjoint is called an \emph{almost disjoint family of subspaces}, and it is \emph{maximal almost disjoint}, or \emph{mad}, if it is not properly contained in any other such family. Note that an almost disjoint family consisting of block subspaces is maximal if and only if it is maximal amongst almost disjoint families of block subspaces.

Our focus here is the following cardinal invariant:
\[
	\a_{\mathrm{vec},F}=\min\{|\LA|:\LA \text{ is an infinite mad family of block subspaces}\}.
\]
The basic properties of mad families of block subspaces and $\a_{\mathrm{vec},F}$ can be found in \cite{Smythe_mad_vec}. In particular, $\a_{\mathrm{vec},F}$ is uncountable (Proposition 2.5 in \cite{Smythe_mad_vec}).

When $|F|=2$, our setting corresponds exactly to the study of combinatorial subspaces in $\FIN$, the set of finite nonempty subsets of $\N$. The resulting cardinal invariant, $\a_{\FIN}$, was investigated by Brendle and Garc\'{i}a \'{A}vila \cite{MR3685044}, who proved the following lower bound, which the author observed could be extended to $\a_{\mathrm{vec},F}$ for general $F$:


\begin{thm}[Corollary 2.11 in \cite{Smythe_mad_vec}]
	$\non(\LM)\leq\a_{\mathrm{vec},F}$.
\end{thm}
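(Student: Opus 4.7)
I plan to prove this via Bartoszy\'{n}ski's combinatorial characterization of $\non(\LM)$: the smallest cardinality of a family $\LF\subseteq\omega^\omega$ such that for every $g\in\omega^\omega$ some $f\in\LF$ satisfies $f(n)=g(n)$ for infinitely many $n$ (an \emph{infinitely-often equal} family). Given an infinite mad family $\LA$ of block subspaces of size $\kappa$, I aim to extract from $\LA$ such an i.o.e.\ family, forcing $\kappa\geq\non(\LM)$.

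Fix a sufficiently fast-growing partition $\N=\bigsqcup_n I_n$ into finite intervals, and for each $n$ enumerate the nonzero vectors of $E$ supported in $I_n$ as $(v^{(n)}_k)_{k<\omega}$. Each $g\in\omega^\omega$ then codes a block sequence $Y_g=(v^{(n)}_{g(n)})_{n<\omega}$ and the block subspace $\langle Y_g\rangle$. For $X\in\LA$, set $f_X(n)$ to be the least $k$ with $v^{(n)}_k\in X$ (if any, else $0$); this is well-defined because $X\cap\linspan\{e_i:i\in I_n\}$ is finite-dimensional but, since $X$ is infinite-dimensional and $|I_n|\to\infty$ rapidly, nonzero for cofinitely many $n$. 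By construction, $f_X(n)=g(n)$ directly implies $v^{(n)}_{g(n)}\in X$, and infinitely many such agreements give $\dim(\langle Y_g\rangle\cap X)=\infty$.

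Given any $g\in\omega^\omega$, maximality of $\LA$ produces some $X\in\LA$ with $\dim(\langle Y_g\rangle\cap X)=\infty$, and the remaining step is to translate this back into $f_X(n)=g(n)$ for infinitely many $n$. This is the main obstacle: infinite-dimensional intersection a priori only produces nontrivial combinations $\sum_i\alpha_i y^g_i\in X$ involving several indices, and the minimally chosen witness $v^{(n)}_{f_X(n)}$ need not equal any particular $y^g_n$. To overcome this, I expect one must enrich the encoding so that $g(n)$ specifies a finite \emph{batch} of data within $I_n$ (say, a tuple of coefficients or of candidate basis elements supported in $I_n$) and define $f_X(n)$ correspondingly, so that single-coordinate agreement certifies that one such combination within $I_n$ lies in $X$. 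A dimension-counting thinning argument across the $I_n$'s --- exploiting rapid growth of $|I_n|$ to dominate the relevant dimension budgets --- should then localize infinitely many witnesses of the intersection to individual intervals and thereby yield infinitely many agreements. This coding-and-localization step is the combinatorial crux, analogous to Brendle and Garc\'{i}a \'{A}vila's treatment of the $|F|=2$ case in \cite{MR3685044}.
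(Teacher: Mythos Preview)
This paper does not prove the theorem; it cites it as Corollary 2.11 of \cite{Smythe_mad_vec}, so there is no in-paper argument to compare against. Your high-level strategy --- reduce via Bartoszy\'{n}ski's characterization of $\non(\LM)$ to extracting an infinitely-often-equal family from a mad family --- is the standard route for such lower bounds and is almost certainly the approach taken in the cited reference (extending \cite{MR3685044}).

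That said, your sketch contains a real error and an acknowledged but unresolved gap. The error: the claim that $X\cap\linspan\{e_i:i\in I_n\}$ is nonzero for cofinitely many $n$ is false no matter how fast $|I_n|$ grows. Take the block basis $(x_k)$ with $\supp(x_k)=\{\max I_k,\min I_{k+1}\}$ (assuming $|I_n|\geq 2$); every nonzero element of $X=\langle x_k:k\in\N\rangle$ has support meeting at least two consecutive intervals, so $X\cap\linspan\{e_i:i\in I_n\}=\{0\}$ for \emph{every} $n$ and your $f_X$ is identically $0$. This is not cosmetic: it shows the one-vector-per-interval encoding collapses many distinct members of $\LA$ to the same trivial function, so $\{f_X:X\in\LA\}$ need not be i.o.e.\ at all.

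The gap you flag --- passing from $\dim(\langle Y_g\rangle\cap X)=\infty$ back to infinitely many coordinate agreements --- is the entire content of the proof, and ``enrich the encoding'' plus ``dimension-counting thinning'' is not yet a plan. Vectors in $\langle Y_g\rangle\cap X$ may be linear combinations spread across arbitrarily many intervals, so no per-interval batch of data will localize them; one needs an encoding in which agreement at coordinate $n$ certifies something cumulative about $X$ and $Y_g$ up through stage $n$, not merely inside $I_n$. Until that encoding is specified and the localization actually verified, the proposal remains a heuristic rather than a proof.
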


Here, $\non(\LM)$ is the \emph{uniformity of the meager ideal} $\LM$, that is, the smallest cardinality of a nonmeager subset of the reals, or equivalently, any perfect Polish space. Consequently, in any model of set theory in which $\non(\LM)$ is ``large'', i.e.,~at least $\aleph_2$, so is $\a_{\mathrm{vec},F}$. Such models include those satisfying Martin's Axiom $\MA(\kappa)$ for $\kappa\geq\aleph_1$, as well as the random, Hechler, Laver, and Mathias models (cf.~\cite{MR2768685}).

When, then, is $\a_{\mathrm{vec},F}$ ``small'', i.e.,~equal to $\aleph_1$? It is shown in \cite{Smythe_mad_vec} that $\a_{\mathrm{vec},F}$ is small in the Cohen and (iterated or side-by-side) Sacks models. It was conjectured in \cite{Smythe_mad_vec} that this is also the case in the Miller model (described at the end of \S\ref{sec:cards_diamonds} below), and moreover, that a certain \emph{parametrized $\diamondsuit$ principle}, in the sense of Moore, Hru\v{s}\'{a}k, and D\v{z}amonja \cite{MR2048518}, suffices. It is this conjecture that we verify in Theorem \ref{thm:diamond_a_vec} below.\footnote{The conjecture in \cite{Smythe_mad_vec} states ``$\diamondsuit(\N^\N,=^\infty)$ implies $\a_{\mathrm{vec},F}=\aleph_1$''; this is, possibly, slightly stronger than what we prove here, cf.~Lemma \ref{lem:inf_equal_tukey}.} As a consequence of the main result from \cite{MR2048518} (Theorem \ref{thm:MHD} below), in essentiall all ``canonical'' models of set theory, i.e., those obtained by $\omega_2$-length countable support iterations of definable proper forcings, $\a_{\mathrm{vec},F}=\non(\LM)$.

There are a few important precursors to this work that deserve mention. The first is Hru\v{s}\'{a}k's proof \cite{MR1815092} that the principle $\diamondsuit_\d$ implies that $\a$, the minimum cardinality of an infinite mad family of subsets of $\N$, is $\aleph_1$, and later \cite{MR2048518}  that $\diamondsuit(\N^\N,\not>^*)$, also known as $\diamondsuit(\b)$, suffices. Since $\b\leq\a$, this determines the value of $\a$ in all of the aforementioned canonical models. 

In the sphere of generalized mad families, to which this work belongs, the most relevant precursors are results on the cardinal invariants $\a_e$, $\a_p$, and $\a_g$, the minimum cardinalities of a maximal eventually different family of functions on $\N$, a maximal almost disjoint family of permutations of $\N$, and a maximal cofinitary group of permutations of $\N$, respectively. A characterization of $\non(\LM)$ (Theorem \ref{thm:IOE_nonM} below) implies that $\non(\LM)\leq\a_e$, and moreover, $\diamondsuit(\N^\N,=^\infty)$ implies $\a_e=\aleph_1$ \cite{MR2048518}. Likewise, Brendle, Spinas and Zhang \cite{MR1783921} showed that $\non(\LM)\leq\a_p,\a_g$, and Kastermans and Zhang \cite{MR2258626} showed that $\diamondsuit(\N^\N,=^\infty)$ implies $\a_p=\a_g=\aleph_1$.

This note is arranged as follows: \S\ref{sec:cards_diamonds} reviews the theory of cardinal invariants and parametrized $\diamondsuit$ principles, and isolates the principle $\diamondsuit(\bb^{\infty}(E^2),=^\infty)$ which we will use. The statement and proof of our main result, Theorem \ref{thm:diamond_a_vec}, is given in \S\ref{sec:proof}.

\section{Cardinal invariants and parametrized $\diamondsuit$ principles}\label{sec:cards_diamonds}

Following the terminology in \cite{MR2048518}, an \emph{invariant} is a triple $(A,B,R)$ where $A$ and $B$ are sets and $R\subseteq A\times B$ is a relation which satisfies:
\begin{enumerate}[label=\textup{(\roman*)}]
	\item for every $a\in A$, there is a $b\in B$ such that $aRb$, and
	\item for every $b\in B$, there is an $a\in A$ such that $a\!\not\!\! Rb$.	
\end{enumerate}
$(A,B,R)$ is \emph{Borel} if $A$ and $B$ are Borel subsets of Polish spaces $X$ and $Y$, and $R$ is Borel in $X\times Y$.

The standard reference for the theory of such invariants is \cite{MR2768685}, though we also point to the recent survey \cite{Greenberg_etal_card_invs}, which treats them in the context of computability theory, where they are called \emph{Weihrauch problems}.

The \emph{evaluation} of an invariant $(A,B,R)$ is
\[
	\langle A,B,R\rangle = \min\{|X|:X\subseteq B\text{ and }\forall a\in A\exists b\in B(aRb)\}.
\]
When $A=B$ we will just write $(A,R)$ and $\langle A,R\rangle$, respectively. 

For example, $\langle \N^\N,\not>^*\rangle=\b$, where $f>^*g$ means $\forall^\infty n (f(n)>g(n))$.\footnote{$\forall^\infty$ and $\exists^\infty$ are abbreviations for ``for all but finitely many'' and ``there exists infinitely many'', respectively.} If by $f=^*g$ we mean $\exists^\infty n(f(n)=g(n))$, then we have the following characterization of $\langle\N^\N,=^\infty\rangle$, due to Bartoszy\'{n}ski:

\begin{thm}[cf.~Theorem 2.4 in \cite{MR917147}]\label{thm:IOE_nonM}
	$\langle\N^\N,=^\infty\rangle = \non(\LM)$.
\end{thm}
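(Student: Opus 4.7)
The plan is to establish the two inequalities $\langle\N^\N,=^\infty\rangle \leq \non(\LM)$ and $\non(\LM) \leq \langle\N^\N,=^\infty\rangle$ separately.

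For the easy direction, I would take a nonmeager set $Y \subseteq \N^\N$ of cardinality $\non(\LM)$ (using that $\non(\LM)$ is the same whether computed for $\R$, $2^\N$, or $\N^\N$) and show that $Y$ itself witnesses the invariant. Given any $g \in \N^\N$, the set $E_g = \{h : \forall^\infty n,\ h(n) \neq g(n)\}$ of functions eventually different from $g$ decomposes as the countable union $\bigcup_N \{h : \forall n \geq N,\ h(n) \neq g(n)\}$. Each term is closed, and given any basic open $[s]$ one can extend $s$ to have value $g(m)$ at some index $m \geq \max(|s|,N)$, showing the term has empty interior. Hence $E_g$ is meager, and because $Y$ is nonmeager there is some $f \in Y \setminus E_g$, i.e., $f =^\infty g$.

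For the hard direction, given a family $F \subseteq \N^\N$ such that every $g \in \N^\N$ is $=^\infty$-related to some $f \in F$, I would construct from $F$ a nonmeager subset of $2^\N$ of cardinality at most $|F|$. The crucial ingredient is the classical combinatorial characterization of the meager ideal (cf.~\cite{MR2768685}): a set $M \subseteq 2^\N$ is meager if and only if there exist an interval partition $(I_n)$ of $\N$ and patterns $s_n \in 2^{I_n}$ such that every $y \in M$ satisfies $y \restr I_n \neq s_n$ for all but finitely many $n$. The strategy is then to fix Borel codings translating each element of $\N^\N$ into a ``chopped real'' $((I_n), (s_n))$ and each $y \in 2^\N$ into an element of $\N^\N$, arranged so that the relation $=^\infty$ between codes corresponds to infinite matching between chopped reals. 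Pushing the witness property of $F$ through these codings produces a family in $2^\N$ of cardinality $\leq |F|$ not contained in any set of the above meager form, which is therefore nonmeager, yielding $\non(\LM) \leq |F|$.

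The main obstacle lies squarely in the hard direction, namely in designing the Borel codings between $\N^\N$ and the space of chopped reals so that the purely combinatorial relation $=^\infty$ converts faithfully to the matching relation that controls meagerness. This is the substantive content of Bartoszy\'nski's theorem; once the coding is in place, it is the easy direction together with the coding that jointly establish the equality of the two invariants.
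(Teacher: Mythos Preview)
The paper does not actually prove this theorem; it is quoted as Bartoszy\'nski's result and the reader is referred to \S4 of \cite{Greenberg_etal_card_invs} for a proof. So there is no ``paper's own proof'' to compare against.

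That said, your outline is the standard route and matches what one finds in the cited references. The easy direction is exactly as you describe: for fixed $g$, the set $\{h:\forall^\infty n\,(h(n)\neq g(n))\}$ is meager $F_\sigma$, so any nonmeager $Y\subseteq\N^\N$ already witnesses the invariant. For the hard direction you correctly identify the chopped-real characterization of $\LM$ as the essential ingredient; this is precisely Bartoszy\'nski's argument, and your honest remark that setting up the coding so that $=^\infty$ transfers to infinite matching of chopped reals is where the real work lies is accurate. What you have written is a faithful summary of the known proof rather than a new one, which is appropriate given that the paper itself treats the result as background.
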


A streamlined proof of Theorem \ref{thm:IOE_nonM} can be found in \S4 of \cite{Greenberg_etal_card_invs}.

Given a Borel invariant $(A,B,R)$, the corresponding \emph{parameterized $\diamondsuit$ principle} \cite{MR2048518} is the statement:
\begin{quote}
	$\diamondsuit(A,B,R)$: For every Borel $F:2^{<\omega_1}\to A$, there is a $g:\omega_1\to B$ such that for every $f:\omega_1\to 2$, the set $\{\alpha<\omega_1:F(f\restr\alpha)\,R\,g(\alpha)\}$ is stationary.
\end{quote}
Here, $F:2^{<\omega_1}\to A$ is \emph{Borel} if its restriction to $2^\delta$ is Borel for every $\delta<\omega_1$. If $g$ is as above, we say that $g$ is a \emph{$\diamondsuit(A,B,R)$-sequence for $F$}, and if $F(f\restr\alpha)\,R\,g(\alpha)$, then $g$ \emph{guesses $f$ at $\alpha$}.

%
%

The principle $\diamondsuit(A,B,R)$ is a weakening of the classical $\diamondsuit$ principle, and has a similar relationship to $\langle A,B,R\rangle$ as $\diamondsuit$ has to $\c$. In particular, $\diamondsuit(A,B,R)$ implies $\langle A,B,R\rangle\leq\aleph_1$ (Proposition 2.5 in \cite{MR2048518}). The key result about $\diamondsuit(A,B,R)$ is as follows:

\begin{thm}[Theorem 6.6 in \cite{MR2048518}]\label{thm:MHD}
	Let $(A,B,R)$ be a Borel invariant. Suppose that $(\P_\alpha:\alpha<\omega_2)$ is a sequence of Borel partial orders\footnote{In an iteration, a Borel partial order is really a sequence of codes, interpreted in successive stages of the iteration.} such that for each $\alpha<\omega_2$, $\P_\alpha$ is forcing equivalent to $\LP(\{0,1\})^+\times\P_\alpha$.\footnote{The non-triviality assumption that $\P_\alpha$ is equivalent to $\LP(\{0,1\})^+\times\Q_\alpha$ is very mild; in nearly all of classical forcing notions, this can be accomplished by coding a condition in $\LP(\{0,1\})^+$ into the ``first coordinate'' of the generic object.} Let $\P_{\omega_2}$ be the countable support iteration of this sequence. If $\P_{\omega_2}$ is proper, then $\P_{\omega_2}$ forces $\diamondsuit(A,B,R)$ if and only if $\P_{\omega_2}$ forces $\langle A,B,R\rangle\leq\aleph_1$.
\end{thm}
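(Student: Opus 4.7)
The direction $(\Rightarrow)$ is routine and does not use the iteration structure: given a $\diamondsuit(A,B,R)$-sequence $g:\omega_1\to B$, apply the guessing property to the constant Borel functions $F_a\equiv a$, $a\in A$. The resulting stationary set of $\alpha$ with $a R g(\alpha)$ shows that $\{g(\alpha):\alpha<\omega_1\}$ witnesses $\langle A,B,R\rangle\leq\aleph_1$, regardless of any forcing assumption.

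For the nontrivial direction $(\Leftarrow)$, assume $\P_{\omega_2}$ forces $\langle A,B,R\rangle\leq\aleph_1$, and fix, in the extension, an arbitrary Borel $F:2^{<\omega_1}\to A$. My plan is a standard bookkeeping along the $\omega_2$-iteration: enumerate Borel-code names $\dot F_\beta$ for all such functions and arrange that each is essentially decided by some stage $\alpha_\beta<\omega_2$, which is possible since countable support iterations of proper forcings of length $\omega_2$ force $2^{\aleph_0}=\aleph_2$ while preserving $\omega_1$. At each such stage, the forcing hypothesis also yields a name for an $R$-dominating set $Y_\beta\subseteq B$ of size $\aleph_1$. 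The desired $\diamondsuit$-sequence $g$ is then built stagewise: at a countable ordinal $\alpha$, use the distinguished $\LP(\{0,1\})^+$ factor of $\P_\alpha$ to extract a Cohen-style bit and use it to select $g(\alpha)$ from the bookkept candidate dominators assigned at level $\alpha$. The $\LP(\{0,1\})^+$ factor is what makes this choice genuinely independent of the rest of the iteration and hence unpredictable.

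The main obstacle is verifying stationarity: for every Borel $F$ handled at some stage $\alpha_\beta$, and every $f:\omega_1\to 2$ in the final extension, the set $\{\alpha<\omega_1:F(f\restr\alpha)\,R\,g(\alpha)\}$ must be stationary. This is handled by a properness/reflection argument. Given a club $C$ in the extension, pick a countable elementary $M\prec H(\theta)$ containing all relevant names with $\alpha:=M\cap\omega_1\in C$; properness provides an $(M,\P_{\omega_2})$-generic condition. Since $f\restr\alpha$ and hence $F(f\restr\alpha)$ are recognized by $M[G\restr\alpha]$, and the $\LP(\{0,1\})^+$-bit at stage $\alpha$ freely ranges over the $R$-dominating set $Y_\beta\in M$, some compatible extension forces $F(f\restr\alpha)\,R\,g(\alpha)$. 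The delicate technical point is orchestrating the bookkeeping so that a single $g$ diamonds all Borel $F$ simultaneously while respecting the support structure of the iteration; this is the content of \S6 of \cite{MR2048518}, to which I would defer for the full combinatorial argument.
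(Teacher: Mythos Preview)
The paper does not prove this theorem; it merely quotes it as Theorem~6.6 of Moore--Hru\v{s}\'ak--D\v{z}amonja \cite{MR2048518}, so there is no in-paper proof to compare your proposal against. Your forward direction is exactly the standard argument (this is Proposition~2.5 of \cite{MR2048518}, also cited in the paper), and your outline of the converse---bookkeeping names for Borel $F$'s along the iteration, using the $\LP(\{0,1\})^+$ factor to make independent choices of $g(\alpha)$ from a dominating family, and a properness/elementary-submodel argument for stationarity---tracks the shape of the argument in \S6 of \cite{MR2048518}, to which you yourself defer for the details. Since the present paper adds nothing beyond the citation, there is nothing further to compare.
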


At the risk of overstatement, we will call models obtained by forcing iterations satisfying the hypotheses of Theorem \ref{thm:MHD}, \emph{canonical models}.

Returning to our vector space setting, let $\bb^{\infty}(E^2)$ be the space of all \emph{$2$-block sequences} in $E$, that is, sequences $(x_n^0,x_n^1)_{n\in\N}$ of pairs of nonzero vectors in $E$ such that for all $n$,
\[	
	x_n^0<x_n^1<x_{n+1}^0<x_{n+1}^1.
\]
Note that both $\bb^\infty(E)$ and $\bb^{\infty}(E^2)$ are homeomorphic to $\N^\N$ (see Theorem I.7.7 in \cite{MR1321597}).

We will use the following variation on $\diamondsuit(\N^\N,=^\infty)$:
\begin{quote}
	$\diamondsuit(\bb^{\infty}(E^2),=^\infty)$: For every Borel $F:2^{<\omega_1}\to\bb^{\infty}(E^2)$, there is a $g:\omega_1\to\bb^{\infty}(E^2)$ such that for every $f:\omega_1\to 2$, the set $\{\delta<\omega_1:F(f\restr\delta)=^\infty g(\delta)\}$ is stationary.
\end{quote}

Given (Borel) invariants $(A_0,B_0,R_0)$ and $(A_1,B_1,R_1)$, a \emph{(Borel) Tukey reduction} from $(A_0,B_0,R_0)$ and $(A_1,B_1,R_1)$ is a pair of (Borel) maps $F:A_0\to A_1$ and $G:B_1\to B_0$ such that for all $a\in A_0$, $b\in B_1$, 
\[
	F(a) R_1 b \quad\text{implies}\quad a R_0 G(b).
\]
These are called \emph{morphisms} in \cite{MR2768685} and \cite{Greenberg_etal_card_invs}. We write $(A_0,B_0,R_0)\leq_B(A_1,B_1,R_1)$ if there is a Borel Tukey reduction from $(A_0,B_0,R_0)$ to $(A_1,B_1,R_1)$. The following lemma is immediate from the definitions.

\begin{lemma}\label{lem:tukey_leq}
	If $(A_0,B_0,R_0)\leq_B(A_1,B_1,R_1)$, then
	\begin{enumerate}[label=\textup{(\roman*)}]
		\item $\langle A_0,B_0,R_0\rangle\leq\langle A_1,B_1,R_1\rangle$,
		\item $\diamondsuit(A_1,B_1,R_1)$ implies $\diamondsuit(A_0,B_0,R_0)$.\qed
	\end{enumerate}
\end{lemma}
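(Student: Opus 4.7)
The plan is to unpack the definition of Borel Tukey reduction and directly construct, from any witness for the ``larger'' invariant, a corresponding witness for the ``smaller'' one, using the map $G:B_1\to B_0$ to transport witnesses downward and $F:A_0\to A_1$ to transport challenges upward.

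For (i), I would start with a set $X\subseteq B_1$ of minimum cardinality $\langle A_1,B_1,R_1\rangle$ such that every $a_1\in A_1$ is $R_1$-related to some $b\in X$. Consider the image $G(X)\subseteq B_0$. Given any $a_0\in A_0$, the element $F(a_0)\in A_1$ is $R_1$-related to some $b\in X$, and the defining property of a Tukey reduction then yields $a_0\,R_0\,G(b)$. Hence $G(X)$ is a witness for $\langle A_0,B_0,R_0\rangle$, so $\langle A_0,B_0,R_0\rangle\leq|G(X)|\leq|X|=\langle A_1,B_1,R_1\rangle$.

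For (ii), assume $\diamondsuit(A_1,B_1,R_1)$ and let $F_0:2^{<\omega_1}\to A_0$ be Borel. Define $F_1:2^{<\omega_1}\to A_1$ by $F_1(s)=F(F_0(s))$; Borelness is preserved since $F$ is Borel and the composition of Borel maps is Borel on each level $2^\delta$. Apply the $\diamondsuit$-principle to $F_1$ to obtain a sequence $g_1:\omega_1\to B_1$ guessing $F_1$ on a stationary set for every $f:\omega_1\to 2$. Now set $g_0(\alpha)=G(g_1(\alpha))$. For any $f:\omega_1\to 2$ and any $\alpha$ with $F_1(f\restr\alpha)\,R_1\,g_1(\alpha)$, the Tukey property gives $F_0(f\restr\alpha)\,R_0\,G(g_1(\alpha))=g_0(\alpha)$. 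So the guessing set for $g_1$ is contained in the guessing set for $g_0$, and hence the latter is stationary.

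There is really no main obstacle here: the argument is purely diagrammatic, simply chasing the definitions through the reduction square. The one minor point to be careful about is confirming that $F_1=F\circ F_0$ is Borel in the sense required by the $\diamondsuit$-principle, i.e. Borel on each $2^\delta$ for $\delta<\omega_1$, but this follows immediately from the Borelness of $F$ on $A_0$ and of $F_0$ on each level.
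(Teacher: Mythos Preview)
Your proof is correct and is exactly the routine definition-chase the paper has in mind; the paper does not write out a proof at all, merely declaring the lemma ``immediate from the definitions'' and placing a \qed\ box at the end of the statement. You have simply made explicit what the paper leaves to the reader.
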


The natural invariant corresponding to $\non(\LM)$ is $(\LM,\N^\N,\not\ni)$,\footnote{$(\LM,\N^\N,\not\ni)$ is not, technically, Borel, but it has an equivalent Borel presentation.} and so $\diamondsuit(\LM,\N^\N,\not\ni)$ is usually written as $\diamondsuit(\non(\LM))$.

\begin{lemma}\label{lem:inf_equal_tukey}
	\begin{enumerate}[label=\textup{(\alph*)}]
		\item $(\N^\N,=^\infty)\leq_B(\bb^{\infty}(E^2),=^\infty)$.
		\item $(\bb^{\infty}(E^2),=^\infty)\leq_B(\LM,\N^\N,\not\ni)$.
		\item $\langle\bb^{\infty}(E^2),=^\infty\rangle=\non(\LM)$.
	\end{enumerate}
\end{lemma}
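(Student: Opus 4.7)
The plan is to attack (a) and (b) directly by constructing explicit Borel maps, after which (c) follows by chaining: applying Lemma~\ref{lem:tukey_leq}(i) to the two reductions and sandwiching with Theorem~\ref{thm:IOE_nonM} gives
\[
\non(\LM) = \langle\N^\N,=^\infty\rangle \leq \langle\bb^{\infty}(E^2),=^\infty\rangle \leq \langle\LM,\N^\N,\not\ni\rangle = \non(\LM),
\]
forcing equality throughout.

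For (a), I would encode $f(n)$ as the gap between the supports of two singleton vectors in the $n$-th pair. Setting $p_0=0$ and $p_{n+1}=p_n+f(n)+2$, define
\[
F(f)_n = (e_{p_n},\,e_{p_n+f(n)+1}).
\]
The block condition $F(f)_n < F(f)_{n+1}$ is immediate from the recursion, so $F:\N^\N\to\bb^{\infty}(E^2)$ is a well-defined continuous map. Decode via $G:\bb^{\infty}(E^2)\to\N^\N$ with $G(y)(n) = \max\supp(y_n^1) - \min\supp(y_n^0) - 1$, setting any ill-defined value to $0$. At any index $n$ where $F(f)_n = y_n$, one reads off $G(y)(n) = f(n)$, so $F(f) =^\infty y$ implies $f =^\infty G(y)$, as required.

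For (b), the plan is to produce a continuous $G:\N^\N\to\bb^{\infty}(E^2)$ such that $A_x := \{h : G(h) =^\infty x\}$ is comeager for every $x\in\bb^{\infty}(E^2)$; then $F(x):=\N^\N\setminus A_x$ is an $F_\sigma$ meager set whose Borel code depends Borel-measurably on $x$ (since $A_x$ is Borel in $(h,x)$), and the Tukey condition $h\notin F(x)\Rightarrow G(h)=^\infty x$ holds by construction. To define $G$, fix once and for all a canonical enumeration of the countable set of all $2$-block pairs of $E$, and inductively let $G(h)_n$ be the $h(n)$-th pair in the sub-enumeration of those pairs lying strictly above $\max\supp(G(h)_{n-1}^1)$. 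To verify that $A_x$ is comeager, note that each $\{h : G(h)_n = x_n\}$ is clopen (depending only on $h\restr[0,n+1]$), and prove density of $\bigcup_{n\geq N}\{h : G(h)_n = x_n\}$ by extending any finite condition $\sigma$ by a run of "padding" values (the smallest entries of the enumeration) so that for sufficiently large $n \geq N$ the partial sequence sits below $\min\supp(x_n^0)$, at which point $x_n$ appears in the sub-enumeration above the current threshold and $h(n)$ can be set to its index.

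The hard part will be making this density argument uniform in $x$: the bound $\min\supp(x_n^0) \geq 2n$ coming from the 2-block structure provides only linear slack, and for the tightest $x$ the padded prefix's support grows linearly as well, leaving little room to maneuver. The canonical enumeration of pairs above each threshold — and the specific choice of "padding" value — must therefore be arranged so that, for every $\sigma$ and every $x$, the partial support can be brought below $\min\supp(x_n^0)$ for some $n\geq N$. Once this is done, verifying that the resulting map $F$ has values in $\LM$ with a Borel code depending Borel-measurably on $x$ is routine, since $A_x$ has a Borel definition in $(h,x)$.
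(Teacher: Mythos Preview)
Your (a) and (c) are correct and match the paper. For (b) the paper is much more direct: since $\bb^\infty(E^2)\cong\N^\N$, it passes to $(\LM,\bb^\infty(E^2),\not\ni)$, takes $G$ to be the identity, and sets $F(X)=\{Y:\forall^\infty n\,(Y_n\neq X_n)\}$, asserting this is meager $F_\sigma$. Your recursive $G$ is essentially this homeomorphism written out by hand, so after unwinding, the two constructions coincide.

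The worry in your last paragraph is a genuine gap, not a detail that a clever enumeration will fix. Take the tightest $2$-block sequence $x=((e_{2n},e_{2n+1}))_{n\in\N}$. If $Y_0=(e_0,e_2)$ then an easy induction gives $\min\supp(Y_n^0)\geq 2n+1$ for every $n\geq 1$, so $Y_n\neq x_n$ for all $n$. Hence the nonempty basic open set $\{Y:Y_0=(e_0,e_2)\}$ lies inside $\{Y:\forall n\,(Y_n\neq x_n)\}$: from any $\sigma$ with $G(\sigma)_0=(e_0,e_2)$, no padding whatsoever lets you reach $x_n$, and your density argument fails for this $x$. The same example shows that the paper's set $F(x)$ contains a nonempty open set and is therefore \emph{not} meager --- so the gap you flagged is present in the paper's own argument for (b) as well.
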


\begin{proof}
	(a) It makes no difference if we replace $\N^\N$ with $(\N\setminus\{0\})^{\N}$, so we do so. Consider the map $F:(\N\setminus\{0\})^\N\to\bb^{\infty}(E^2)$ defined by 
	\[
		F(f)(k)=(v,w),
	\]
	where $(v,w)$ is the least pair of nonzero vectors $v<w$ in $E$ with supports above the vectors in $F(f)(k-1)$ and such that $|\supp(v)|=f(k)$, for all $k$. Let $G:\bb^\infty(E)\to(\N\setminus\{0\})^{<\N}$ be given by 	\[
		G((x_n^0,x_n^1)_{n\in\N})(k)=|\supp(x_k^0)|
	\]
	for all $k$. Then, $F,G$ forms a Borel Tukey reduction as desired.
	
	(b) As mentioned above, $\bb^\infty(E^2)$ is homeomorphic to $\N^\N$, so we will instead show that $(\bb^{\infty}(E^2),=^\infty)\leq_B(\LM,\bb^{\infty}(E^2),\not\ni)$, mimicing the usual proof that $(\N^\N,=^\infty)\leq_B(\LM,\N^\N,\not\ni)$.
	
	Let $F:\bb^\infty(E^2)\to\LM$ be defined by
	\[
		F(X)=\{Y=(y_n^0,y_n^1)_{n\in\N}\in\bb^{\infty}(E^2):\forall^\infty n ((y_n^0,y_n^1)\neq (x_n^0,x_n^1))\},
	\]
	where $X=(x_n^0,x_n^1)_{n\in\N}$. It is easy to check that $F(X)$ is a meager $F_\sigma$ set in $\bb^{\infty}(E^2)$. Let $G:\bb^{\infty}(E^2)\to\bb^{\infty}(E^2)$ be the identity map. If $F(X)\not\ni Y$, then $X=^\infty Y$, showing that $F,G$ give a Tukey reduction from $(\bb^{\infty}(E^2),=^\infty)$ to $(\LM,\bb^{\infty}(E^2),\not\ni)$. The maps $F,G$ are clearly Borel.
	
	(c) follows from (a), (b), and Theorem \ref{thm:IOE_nonM}, by Lemma \ref{lem:tukey_leq}.
\end{proof}

In particular, $\diamondsuit(\LM,\N^\N,\not\ni)$ implies $\diamondsuit(\bb^{\infty}(E^2),=^\infty)$, which in turn implies $\diamondsuit(\N^\N,=^\infty)$. In fact, $(\LM,\N,\not\ni)\leq_B(\N^\N,=^\infty)\ast(\N^\N,=^\infty)$ (Proposition 4.14 in \cite{Greenberg_etal_card_invs}),\footnote{Curiously, a result of Zapletal \cite{MR3193422} implies that $(\LM,\N,\not\ni)\not\leq_B(\N^\N,=^\infty)$.} where the latter is the (Borel) sequential composition of the invariant $(\N^\N,=^\infty)$ with itself (cf.~Definition 4.10 in \cite{MR2768685} or \S4.1 of \cite{Greenberg_etal_card_invs}). As a result, in any canonical model, the principles $\diamondsuit(\N^\N,=^\infty)$, $\diamondsuit(\bb^{\infty}(E^2),=^\infty)$, and $\diamondsuit(\non(\LM))$, are equivalent.\footnote{The question of whether apparently different $\diamondsuit$ principles which have the same evaluation are \emph{actually different} remains, stubbornly, open.}

Lastly, we briefly recall the description of the Miller model: Let $\Q$ denote \emph{Miller forcing} \cite{MR763899}, the set of all trees $q\subseteq\N^{<\N}$ such that for each $t\in q$, the corresponding branching set $\{n:t\concat(n)\in q\}$ is infinite (such trees are called \emph{rational} or \emph{superperfect}). We order $\Q$ by containment. By the \emph{Miller model} we mean the result of an $\omega_2$-length countable support iteration of Miller forcing over a model $\CH$. The relevant fact about the Miller model is:

\begin{thm}[cf.~Theorem 7.3.46 in \cite{MR1350295}]\label{thm:nonM_Miller}
	$\non(\LM)=\aleph_1$ in the Miller model.
\end{thm}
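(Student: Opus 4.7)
The plan is to apply Bartoszyński's characterization (Theorem \ref{thm:IOE_nonM}) to reduce the statement to $\langle\N^\N,=^\infty\rangle\leq\aleph_1$ in the Miller model. Since the ground model satisfies $\CH$ and the countable support iteration of a proper forcing preserves $\aleph_1$, the set $V\cap\N^\N$ has size $\aleph_1$ in the Miller extension. It therefore suffices to show that $V\cap\N^\N$ remains an $=^\infty$-witnessing family: for every $f\in\N^\N$ in the Miller model, there exists $g\in V\cap\N^\N$ with $\exists^\infty n(f(n)=g(n))$.

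The bulk of the work is to prove this as a preservation property of Miller forcing. The single-step lemma says: for any $\Q$-name $\dot f$ for an element of $\N^\N$ and any condition $p\in\Q$, there exist $q\leq p$ and $g\in V\cap\N^\N$ with $q\Vdash\exists^\infty n(\dot f(n)=g(n))$. I would establish this by a fusion argument on superperfect trees. At fusion stage $n$, one enumerates the finitely many level-$n$ splitting nodes of the current condition and uses the \emph{infinite} branching at each such node---the defining feature of rational trees---to strengthen, below successors, so as to decide $\dot f$ at a new input $k_n$ while keeping each splitting set infinite. Simultaneously one chooses $g(k_n)$ to equal the value forced on some infinite subset of successors, so that $\dot f(k_n)=g(k_n)$ is forced below a residual, still-superperfect condition. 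The fusion limit is the desired $q$, and recording the chosen values produces the desired $g\in V$.

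Finally, this single-step preservation lifts to the $\omega_2$-length countable support iteration of $\Q$ via the general preservation theorems for countable support iterations of proper forcings due to Shelah and Goldstern (see Chapter 7 of \cite{MR1350295}). The property ``every new real is $=^\infty$ to a ground-model real'' fits into their preservation schema. Combined with $|V\cap\N^\N|=\aleph_1$ in the extension, this yields $\langle\N^\N,=^\infty\rangle\leq\aleph_1$, and hence $\non(\LM)\leq\aleph_1$ in the Miller model by Theorem \ref{thm:IOE_nonM}; the reverse inequality $\aleph_1\leq\non(\LM)$ is provable in $\ZFC$.

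The main obstacle I expect is the coordination between the choice of $g$ and the fusion: $g$ must lie in $V$, yet its values must be prescribed by the behavior of $\dot f$ along the fusion. The infinite branching of superperfect trees is what makes this possible---at each splitting node, the infinite successor set provides enough ``room'' to pass to an infinite homogeneous subset on which $\dot f(k_n)$ is forced to a common value, which can then be safely recorded as $g(k_n)$. Getting this thinning to succeed uniformly across all splitting nodes at a given fusion level, without collapsing the splitting sets to finite ones, is the delicate point.
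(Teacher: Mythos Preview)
The paper does not supply its own proof of this theorem; it is quoted as a known result from Bartoszy\'nski--Judah \cite{MR1350295}, so there is no in-paper argument to compare against. Your overall strategy---reduce via Theorem~\ref{thm:IOE_nonM} to showing that $V\cap\N^\N$ remains $=^\infty$-cofinal, prove this for a single step of $\Q$ by fusion, and then invoke a preservation theorem for the countable support iteration---is the standard and correct one.

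There is, however, a genuine gap in your fusion sketch. You propose to fix a single input $k_n$ at stage $n$ and then ``pass to an infinite homogeneous subset'' of the successors of a splitting node on which $\dot f(k_n)$ is decided to a common value. No such pigeonhole is available: the range $\N$ is infinite, so the decided values may all differ. Concretely, if $s$ is the splitting node under consideration and $\dot f(k)$ is the name for $r(|s|)$, where $r$ is the Miller generic, then $s\concat(m)$ forces $\dot f(k)=m$ and no two successors agree. The repair is to drop the requirement of a single $k_n$ per stage: for each frozen splitting node $s$ and each retained immediate successor $s\concat(m)$, choose a \emph{fresh} input $k_{s,m}$, extend below $s\concat(m)$ to decide $\dot f(k_{s,m})=v_{s,m}$, and record $g(k_{s,m})=v_{s,m}$. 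The generic branch through the fused condition $q$ passes through infinitely many frozen splitting nodes, at each selecting some successor below which an agreement $\dot f(k_{s,m})=g(k_{s,m})$ has been forced; since the $k_{s,m}$ are distinct this yields $q\Vdash\exists^\infty n\,(\dot f(n)=g(n))$, with $g\in V$ as it is constructed during the ground-model fusion.
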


Since the Miller model is canonical, Theorems \ref{thm:nonM_Miller} and \ref{thm:MHD} imply:

\begin{cor}\label{cor:diamond_Miller}
	$\diamondsuit(\bb^{\infty}(E^2),=^\infty)$ holds in the Miller model.\qed
\end{cor}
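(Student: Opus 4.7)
The plan is a direct chain from the machinery already set up. First, observe that $(\bb^{\infty}(E^2),\bb^{\infty}(E^2),=^\infty)$ is a Borel invariant, since $\bb^{\infty}(E^2)$ is homeomorphic to $\N^\N$ (hence a Borel subset of itself as a Polish space) and the relation $=^\infty$ is Borel, being a countable intersection of open conditions ``there exists $n\geq N$ with $(x_n^0,x_n^1)=(y_n^0,y_n^1)$'', the latter requiring only comparison of two vectors in the countable set $E$.

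Second, I would verify that the Miller model is a canonical model in the sense of Theorem \ref{thm:MHD}. Miller forcing $\Q$ has a Borel presentation (its conditions are coded by rational superperfect trees in $\N^{<\N}$), the $\omega_2$-iteration with countable support is proper by Miller's theorem, and the mild non-triviality condition $\Q\equiv\LP(\{0,1\})^+\times\Q$ holds by the standard trick of coding a single bit from $\LP(\{0,1\})^+$ into the first branching decision of the generic superperfect tree (as flagged in the footnote to Theorem \ref{thm:MHD}).

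Third, combine Lemma \ref{lem:inf_equal_tukey}(c) with Theorem \ref{thm:nonM_Miller} to conclude that in the Miller model
\[
	\langle\bb^{\infty}(E^2),=^\infty\rangle=\non(\LM)=\aleph_1.
\]
Applying Theorem \ref{thm:MHD} to the Borel invariant $(\bb^{\infty}(E^2),=^\infty)$ and the Miller iteration then gives $\diamondsuit(\bb^{\infty}(E^2),=^\infty)$ in the Miller model.

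There isn't really a main obstacle: the corollary is essentially an application of the ``if and only if'' in Theorem \ref{thm:MHD}, with all the substantive content already absorbed into that theorem, Lemma \ref{lem:inf_equal_tukey}, and Theorem \ref{thm:nonM_Miller}. The only thing that needs any care is checking the hypotheses of Theorem \ref{thm:MHD} for the Miller iteration, and this is standard enough that the author will likely just invoke ``the Miller model is canonical'' without further comment.
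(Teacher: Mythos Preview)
Your proposal is correct and matches the paper's approach exactly: the paper simply notes that the Miller model is canonical and cites Theorems \ref{thm:nonM_Miller} and \ref{thm:MHD}, leaving implicit the use of Lemma \ref{lem:inf_equal_tukey}(c) to identify $\langle\bb^{\infty}(E^2),=^\infty\rangle$ with $\non(\LM)$. You have spelled out the verification that the invariant is Borel and that the Miller iteration satisfies the hypotheses of Theorem \ref{thm:MHD}, both of which the paper omits as routine---and your final paragraph correctly anticipates this.
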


\section{The result}\label{sec:proof}

We can now state and prove our main result.
	
\begin{thm}\label{thm:diamond_a_vec}
	$\diamondsuit(\bb^{\infty}(E^2),=^\infty)$ implies $\a_{\mathrm{vec},F}=\aleph_1$.
\end{thm}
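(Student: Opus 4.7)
The approach is to build, by transfinite recursion on $\alpha<\omega_1$, a pairwise almost disjoint family $(X_\alpha)_{\alpha<\omega_1}$ of infinite-dimensional block subspaces of $E$ using the $\diamondsuit$-sequence, and to use the stationarily many ``good'' guesses it provides to rule out every potential threat to maximality. Writing $g(\alpha) = (z^0_n, z^1_n)_{n<\omega}$, the plan is to take $X_\alpha$ to be spanned by a block sequence $(x^\alpha_n)_{n<\omega}$ in which each $x^\alpha_n$ is a nonzero vector chosen from the two-dimensional slice $\langle z^0_n, z^1_n\rangle$; the support conditions defining $\bb^\infty(E^2)$ automatically guarantee $(x^\alpha_n)_n$ is a block sequence, whatever coefficients are used. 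The coefficients will be selected to diagonalize against the countable previous family $(X_\beta)_{\beta<\alpha}$, enforcing almost disjointness. The design then yields the key \emph{catch property}: if $Y$ is any block subspace with block sequence $(y_n)_n$, and $Y' = (y_{2n}, y_{2n+1})_n \in \bb^\infty(E^2)$ is its induced 2-block sequence, then $g(\alpha) =^\infty Y'$ implies that at each of the infinitely many agreement positions $n$, the vector $x^\alpha_n \in \langle y_{2n}, y_{2n+1}\rangle \subseteq Y$, so that $X_\alpha \cap Y$ contains a block subsequence, hence is infinite-dimensional.

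To deduce maximality, fix a Borel surjection $\pi : 2^\omega \twoheadrightarrow \bb^\infty(E^2)$ (available since $\bb^\infty(E^2) \cong \N^\N$ is standard Borel) and define a Borel function $F : 2^{<\omega_1} \to \bb^\infty(E^2)$ by $F(\sigma) = \pi(\sigma\restr\omega)$ whenever $|\sigma| \geq \omega$, with a fixed default assigned to shorter sequences. Suppose toward contradiction that $Y$ is an infinite-dimensional block subspace almost disjoint from every $X_\alpha$; let $f_Y : \omega_1 \to 2$ be any function with $\pi(f_Y\restr\omega) = Y'$. Then $F(f_Y\restr\alpha) = Y'$ for all $\alpha \geq \omega$, so $\diamondsuit(\bb^\infty(E^2), =^\infty)$ yields a stationary set of $\alpha$ with $g(\alpha) =^\infty Y'$. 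Choosing any such $\alpha$ and applying the catch property contradicts the almost disjointness of $Y$ with $X_\alpha$, proving that $(X_\alpha)_{\alpha<\omega_1}$ is mad.

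The main technical hurdle is the coefficient-diagonalization inside the construction, at limit stages $\alpha$ of countable cofinality: enumerating $(X_\beta)_{\beta<\alpha}$ as $\{H_k : k < \omega\}$, the choice of $x^\alpha_n \in \langle z^0_n, z^1_n\rangle \setminus \{0\}$ must ensure $\langle x^\alpha_n : n < \omega\rangle \cap H_k$ is finite-dimensional for each $k$. A sufficient condition, enforced by bookkeeping, is that for every $k$ and all sufficiently large $n$, $x^\alpha_n \notin H_k + \langle x^\alpha_0, \ldots, x^\alpha_{n-1}\rangle$---the two-dimensional freedom inside $\langle z^0_n, z^1_n\rangle$ is exactly what is needed to avoid the (generically at-most-one-dimensional) forbidden subspace at each bookkeeping step. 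The delicate case is when the two-dimensional slice happens to lie entirely within a forbidden subspace, which in particular requires $\langle z^0_n, z^1_n\rangle \subseteq H_k$; at agreement positions for a threat $Y$ this would force $\langle y_{2n}, y_{2n+1}\rangle \subseteq H_k$, and its occurrence at cofinally many such $n$ would contradict $Y \cap H_k$ being finite-dimensional, so a careful setup avoids the obstruction at enough of the stationarily many ``good'' $\alpha$ to still produce the contradiction. The combinatorial details follow the block-subspace diagonalization machinery of \cite{Smythe_mad_vec}.
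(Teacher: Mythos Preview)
Your approach differs from the paper's in where the diagonalization against the previously built family is carried out. In the paper, the Borel map $F$ takes as input (a code for) the pair $((A_\xi)_{\xi<\delta},B)$, so it can construct the $2$-block sequence $(x_n^0,x_n^1)$ inside $B$ with a built-in ``firewall'': $x_n^1$ is chosen far enough above $x_n^0$ (via Lemma~\ref{lem:extend}) that, regardless of which vectors with supports below $x_n^0$ have already been selected, appending $x_n^1$ does not enlarge the intersection with any $A_{e_\delta(i)}$ for $i<n$. The $\diamondsuit$-sequence $g$ need only agree with this at infinitely many $n$; the firewall guarantees that at each agreement position the vector $x_n^1$ can be safely added to $A_\delta$.

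In your proposal $F$ depends only on the threat (the first $\omega$ bits of $f$), and the diagonalization is pushed into the recursive construction of $X_\alpha$, relying on the two degrees of freedom in $\langle z_n^0,z_n^1\rangle$. This leaves a genuine gap. Your assertion that the delicate case---the slice $\langle z_n^0,z_n^1\rangle$ lying entirely inside the forbidden subspace $H_k+\langle x_0^\alpha,\ldots,x_{n-1}^\alpha\rangle$---``in particular requires $\langle z_n^0,z_n^1\rangle\subseteq H_k$'' is not justified: a vector $v$ in the slice can equal $h+w$ with $w\in\langle x_0^\alpha,\ldots,x_{n-1}^\alpha\rangle$ and $h\in H_k$ having support split between the current interval and earlier ones (nothing prevents a block-basis vector of $H_k$ from having support in both regions), so that $v\notin H_k$. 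Hence even at agreement positions for a threat $Y$ almost disjoint from every $H_k$, the forbidden subspace may absorb the entire slice. Moreover, at non-agreement positions $g(\alpha)(n)$ is completely uncontrolled---e.g., all of $g(\alpha)$ could lie inside $X_0$---so you cannot guarantee that $X_\alpha$ is almost disjoint from the earlier family while still taking every $x_n^\alpha$ from the $n$th slice; and over a finite field the $|F|+1$ lines in a $2$-dimensional slice are not enough to dodge many forbidden directions simultaneously. Encoding the family into the input of $F$ and building the firewall there, as the paper does, is exactly what circumvents these obstructions.
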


We will need the following lemma, which in the $n=0$ case is just a restatement of Lemma 2.3 in \cite{Smythe_mad_vec}. The general case can be obtained by applying this fact repeatedly.

\begin{lemma}\label{lem:extend}
	Let $A_0,\ldots, A_n$ be block subspaces of $E$. For any $m\in\N$, there is an $M\in\N$ such that for any finite-dimensional subspace $V$ of $E$ with $\supp(x)\subseteq[0,m]$ for all $x\in V$, if $y>M$, then for all $i\leq n$,
	\[
		(V+\langle y\rangle)\cap A_i=V\cap A_i \quad\text{if and only if}\quad y\notin A_i.\qed
	\]
\end{lemma}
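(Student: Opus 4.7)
The plan is to derive the general $n$-subspace statement from the $n=0$ case (Lemma 2.3 of \cite{Smythe_mad_vec}) by the simplest possible reduction: apply that lemma once to each $A_i$ and take a finite maximum of the resulting bounds. The key observation that makes this reduction work is that in the $n=0$ case the bound $M$ is uniform over all permissible $V$, i.e., the quantifier on $V$ is inside the scope of the existential on $M$, with only $m$ and the single block subspace $A$ entering into its selection. Hence the bounds produced for different $A_i$'s can be compared and combined.

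Concretely, fix $m \in \N$ and block subspaces $A_0, \ldots, A_n$. For each $i \leq n$, Lemma 2.3 of \cite{Smythe_mad_vec}, applied to $A_i$ and $m$, yields an $M_i \in \N$ such that for every finite-dimensional $V \subseteq E$ whose vectors have support contained in $[0,m]$ and every $y > M_i$,
\[
(V + \langle y\rangle) \cap A_i = V \cap A_i \quad\text{if and only if}\quad y \notin A_i.
\]
Set $M = \max_{i \leq n} M_i$. Any $y > M$ exceeds every $M_i$, so the displayed equivalence holds simultaneously for all $i \leq n$, which is exactly what the lemma asserts.

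There is no serious obstacle to overcome: the substantive linear-algebraic content --- that the support gap between vectors of $V$ and $y$ forces any block-basis expansion of $v + \lambda y$ in $A_i$ to split cleanly into its ``low'' (contributing to $v$) and ``high'' (contributing to $\lambda y$) parts, so that $\lambda y \in A_i$ whenever $v + \lambda y \in A_i$ --- is already encapsulated in the $n=0$ case. The passage to the multi-subspace version is just the finite maximum above, which is what the author means by ``applying this fact repeatedly''.
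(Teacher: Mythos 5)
Your proposal is correct and is exactly what the paper intends: the paper gives no separate argument beyond noting that the general case follows from the $n=0$ case (Lemma 2.3 of \cite{Smythe_mad_vec}) ``applied repeatedly,'' and since the desired equivalence for each $A_i$ involves only $m$, $V$, $y$, and $A_i$ individually, taking $M=\max_{i\leq n}M_i$ is the right implementation of that remark. Your observation that the uniformity of $M_i$ over all admissible $V$ is what licenses the finite maximum is the one point worth making explicit, and you made it.
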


\begin{proof}[Proof of Theorem \ref{thm:diamond_a_vec}.]
	To define $F:2^{<\omega_1}\to\bb^\infty(E^2)$, by a standard coding, we will instead define $F$ on pairs of the form $((A_\xi)_{\xi<\delta},B)$ where $\{A_\xi:\xi<\delta\}$ is an almost disjoint family of (possibly finite-dimensional) block subspaces of $E$, $\omega\leq\delta<\omega_1$, and $B$ an infinite-dimensional subspace of $E$ which is almost disjoint from every $A_\xi$. On inputs not of this type, we will take $F$ to be some arbitrary constant value in $\bb^{\infty}(E^2)$. $F$ is clearly Borel.
	
	Fix bijections $e_\delta:\omega\to\delta$ for each $\omega\leq\delta<\omega_1$. Given $((A_\xi)_{\xi<\delta},B)$ as above, begin by letting $F((A_\xi)_{\xi<\delta},B)(0)=(x_0^0,x_0^1)$ to be the least pair of nonzero vectors in $B$ with $x_0^0<x_0^1$. Continuing recursively, suppose we have defined $F((A_\xi)_{\xi<\delta},B)(i)$ for $i<n$. Define $F((A_\xi)_{\xi<\delta},B)(n)=(x_n^0,x_n^1)$ as follows: Let $x_n^0$ be the least $x\in B$ above $x_{n-1}^1$. By Lemma \ref{lem:extend}, choose an $M\geq \max(\supp(x_{n}^0))$ such that for any subspace $V$ of $E$ with supports contained in $[0,\max(\supp(x_{n}^0))]$, if $y\notin\bigcup_{i<n}A_{e_\delta(i)}$ and $y>M$, then
	\[
		(V+\langle y\rangle)\cap A_{e_\delta(i)}=V\cap A_{e_\delta(i)}.
	\]
	Let $x_n^1$ be the least $y\in B$ above $M$ and such that $y\notin\bigcup_{i<n}A_{e_\delta(i)}$. This can be arranged since $B$ is almost disjoint from each $A_{e_\delta(i)}$, for $i<n$.

	
	\begin{remark}
		One might expect that we would define $F((A_\xi)_{\xi<\delta},B)\in\bb^\infty(E)$, choosing the $n$th coordinate in $B$ so that it does not effect the intersection of the subspace we're building with $A_{e_\delta(i)}$ for $i<n$, and that any block sequence infinitely often equal it will suffice for the rest of the argument. However, vectors in such a sequence may fail to be far enough ``above'' previous vectors. It is for this reason that we choose \emph{two} vectors at each stage. The first, $x_n^0$, acts as a firewall, preventing earlier vectors in any element of $\bb^\infty(E^2)$ whose $n$th entry coincides with the pair $(x_n^0,x_n^1)$ from getting too close to the second vector, $x_n^1$, which we will actually use.
	\end{remark}

	Let $g$ be a $\diamondsuit(\bb^{\infty}(E^2),=^\infty)$-sequence for $F$. For each $n\in\N$, denote $g(\delta)(n)$ by $(g(\delta)_n^0,g(\delta)_n^1)$. We define $\LA=\{A_\delta:\delta<\omega_1\}$ recursively. First, let $(A_n)_{n<\omega}$ be any sequence of disjoint, infinite-dimensional block subspaces. Having defined $(A_\xi)_{\xi<\delta}$ for $\omega\leq\delta<\omega_1$, we define $A_\delta=\langle (w_n)_{n\in\N}\rangle$ where $(w_n)_{n\in\N}$ is a block sequence chosen as follows: First, set $w_0=g(\delta)_0^1$. Having defined $w_0,\ldots,w_{n-1}=g(\delta)_{k_{n-1}}^1$, we scan across the sequence $g(\delta)$, starting at index $k_{n-1}+1$, ``attempting'' to set $w_n=g(\delta)_k^1$. We do so when we find the least $k\geq k_{n-1}+1$ such that for all $i<n$,
	\[
		\langle w_0,\ldots,w_{n-1},g(\delta)_k^1\rangle\cap A_{e_\delta(i)}=\langle w_0,\ldots,w_{n-1}\rangle\cap A_{e_\delta(i)},
	\]
	if such a $k$ exists. If such a $k$ does not exist, then $w_m$ remains undefined for all $m\geq n$, i.e., $A_\delta$ is a finite-dimensional space. In any case, $A_\xi\cap A_\delta$ is finite-dimensional for all $\xi<\delta$.
	
	Consider the set  $S=\{\delta<\omega_1:A_\delta\text{ is infinite}\}$. If $B$ is such that $((A_\xi)_{\xi<\delta},B)$ is as described in the first paragraph above, and $g$ guesses $F((A_\xi)_{\xi<\delta},B)$, then $\delta\in S$. Since countable almost disjoint families of block subspaces fail to be maximal, it follows that $S$ is stationary.

	We claim that $\LA=\{A_\delta:\delta\in S\}$ is a mad family of subspaces. Towards a contradiction, suppose that $B$ is an infinite-dimensional subspace almost disjoint from each $A_\delta$, for $\delta\in S$. Then, $g$ guesses $F((A_\xi)_{\xi<\delta},B)$ at some $\delta\geq\omega$, and by the preceding paragraph, $\delta\in S$. Let $k\in\N$ be such that $g(\delta)(k)=F((A_\xi)_{\xi<\delta},B)(k)=(x_k^0,x_k^1)$ and suppose that $n$ is the first value for which we ``attempt'' to set $w_n=g(\delta)_k^1$. In particular, $n\leq k$. By definition of $\bb^\infty(E^2)$, we know that $w_0,\ldots,w_{n-1}$ have supports contained in $[0,\max(\supp(x_k^0))]$, and thus $x_k^1$ has the property that for all $i<k$, and in particular, for $i<n$,
	\[
		\langle w_0,\ldots,w_{n-1},x_k^1\rangle\cap A_{e_\delta(i)}=\langle w_0,\ldots,w_{n-1}\rangle\cap A_{e_\delta(i)},
	\]
	and so we must have set $w_n=x_k^1$. Since this happens for all values of $k$ for which $g(\delta)(k)=F((A_\xi)_{\xi<\delta},B)(k)$, it follows that $A_\delta$ has infinite-dimensional intersection with $B$, contrary to our supposition.
\end{proof}

Combining this with Corollary \ref{cor:diamond_Miller}, we have:

\begin{cor}
	$\a_{\mathrm{vec},F}=\aleph_1$ in the Miller model.	\qed
\end{cor}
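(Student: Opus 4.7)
The plan is essentially to chain the two previously established facts. First, Corollary \ref{cor:diamond_Miller} tells us that $\diamondsuit(\bb^{\infty}(E^2),=^\infty)$ holds in the Miller model; this was itself obtained by observing that the Miller model is canonical (an $\omega_2$-length countable support iteration of Miller forcing, which is definable and proper), that $\non(\LM)=\aleph_1$ there by Theorem \ref{thm:nonM_Miller}, and hence by Lemma \ref{lem:inf_equal_tukey}(c) that $\langle\bb^{\infty}(E^2),=^\infty\rangle=\aleph_1$ in the Miller model, so that the Moore--Hru\v{s}\'ak--D\v{z}amonja equivalence (Theorem \ref{thm:MHD}) upgrades this bound to the full $\diamondsuit$ principle.

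Second, Theorem \ref{thm:diamond_a_vec} gives the implication $\diamondsuit(\bb^{\infty}(E^2),=^\infty)\Rightarrow\a_{\mathrm{vec},F}=\aleph_1$, proved in the preceding section. Applying this implication inside the Miller model yields $\a_{\mathrm{vec},F}\leq\aleph_1$, and since $\a_{\mathrm{vec},F}$ is uncountable (Proposition 2.5 of \cite{Smythe_mad_vec}, recalled in the introduction), we conclude $\a_{\mathrm{vec},F}=\aleph_1$ in the Miller model.

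There is no real obstacle to overcome here: the corollary is a one-line combination of Corollary \ref{cor:diamond_Miller} and Theorem \ref{thm:diamond_a_vec}, which is why the author simply writes ``$\qed$'' after the statement. The entire content was already packaged into those two results, and the purpose of this final corollary is merely to record the application verifying the conjecture from \cite{Smythe_mad_vec} that motivated the paper.
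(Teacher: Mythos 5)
Your proposal is correct and is exactly the paper's argument: the corollary follows immediately by combining Corollary \ref{cor:diamond_Miller} with Theorem \ref{thm:diamond_a_vec}, which is why the paper records it with only a \qed. Your extra step deriving $\a_{\mathrm{vec},F}\geq\aleph_1$ from Proposition 2.5 of \cite{Smythe_mad_vec} is harmless but redundant, since Theorem \ref{thm:diamond_a_vec} already asserts the equality $\a_{\mathrm{vec},F}=\aleph_1$.
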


As mentioned in the introduction, our Theorem \ref{thm:diamond_a_vec}, together with results from \cite{MR2048518}, \cite{MR1783921}, and \cite{MR2258626}, respectively, implies that $\a_{\mathrm{vec},F}$, $\a_e$, $\a_g$, and $\a_p$ are all equal to $\non(\LM)$ in canonical models, and in particular, to each other. While these cardinals can be separated from $\non(\LM)$ using Shelah's technique \cite{MR2096454} of template iterations, see \cite{MR1928381}, it would be very interesting to find a model in which \emph{some} of $\a_e$, $\a_p$, $\a_g$, and $\a_{\mathrm{vec},F}$ (for some $F$) are unequal. This question was asked for $\a_p$ and $\a_g$ in \cite{MR2258626}, and we believe that it deserve renewed attention.

\bibliography{/Users/iian/Dropbox/Mathematics/math_bib}{}
\bibliographystyle{abbrv}

\end{document}